\providecommand{\U}[1]{\protect\rule{.1in}{.1in}}
\newtheorem{thm}{Theorem}[section]
\newtheorem{cor}[thm]{Corollary}
\newtheorem{lem}[thm]{Lemma}
\newtheorem{pps}[thm]{Proposition}
\theoremstyle{definition}
\newtheorem*{thm*}{Theorem}
\newtheorem*{pps*}{Proposition}
\newtheorem*{lem*}{Lemma}
\theoremstyle{remark}
\numberwithin{equation}{section}
\def\<{\langle}
\def\>{\rangle}
\begin{document}
	
\title[ ]{Complex symmetry of Linear Fractional composition operators on a half-plane }
\author{\textsc{V.V. F\'{a}varo \, P.V. Hai and O.R. Severiano }}
\address[V.V. F\'{a}varo]{Faculdade de Matem\'{a}tica, Universidade Federal de
	Uberlândia, 38.400-902 - Uberl\^{a}ndia.}
\email{vvfavaro@ufu.br}
\address[P. V. Hai]{School of Applied Mathematics and Informatics, Hanoi University of Science and Technology, Vien Toan ung dung va Tin hoc, Dai hoc Bach khoa Hanoi, 1 Dai Co Viet, Hanoi, Vietnam.}
\email{hai.phamviet@hust.edu.vn }

\address [O. R. Severiano]{ Programa Associado de P\'{o}s Gradua\c{c}\~{a}o em
	Matem\'{a}tica Universidade Federal da Para\'{\i}ba/Universidade Federal
de Campina Grande, João Pessoa, Brazil.}
\email{osmar.rrseveriano@gmail.com}
\thanks{V.V. F\'{a}varo is supported by FAPEMIG Grant PPM-00217-18.}
\thanks{O.R. Severiano is postdoctoral fellowship at Programa Associado de Pós Graduação em Matemática UFPB/UFCG, and is
	supported by INCTMat Grant 88887.613486/2021-00. }
\begin{abstract} We investigate the 
bounded composition operators induced by linear fractional self-maps of the right half-plane $\mathbb{C}_+$ on the Hardy space $H^2(\mathbb{C}_+).$ We completely characterize which of these operators are cohyponormal and we find conjugations for the linear fractional composition operators that are complex symmetric. 
\end{abstract}
\subjclass[2010]{47B33,47B32, 47B99}
\keywords{Cohyponormality, complex symmetry, composition operator, Hardy space.}
\maketitle

\section{Introduction}

All along this paper, $\mathbb{C}$ denotes the complex plane, $\mathbb{U}:=\{z\in \mathbb{C}:|z|<1\}$ is the unit disk and $\mathbb{C}_+:=\{z\in \mathbb{C}:\mathrm{Re}(z)>0\}$ is the right half-plane.

Let $\Omega \subset \mathbb{C}$ and let $\mathcal{S}$ be a space of functions defined on $\Omega.$ A composition operator $C_{\phi}$ on $\mathcal{S}$ is an operator acting by composition to the right with a chosen
self-map $\phi$ of $\Omega$, i.e.,
\begin{align*}
C_{\phi}f=f \circ \phi, \quad f \in \mathcal{S}.
\end{align*} 
The self-map $\phi$ is called the \emph{symbol} of the composition operator $C_\phi.$ 
It is well-known that if $\Omega=\mathbb{U}$ and $\phi$ is an analytic self-map of $\mathbb{U}$ then $C_{\phi}$ is bounded on the Hardy space of the unit disk $H^2(\mathbb{U})$ (see \cite{Cowen-MacClauer}).
If $\Omega=\mathbb{C}_+,$ Elliot and Jury established a boundedness criterion for composition operators on the Hardy space of the right half-plane $H^2(\mathbb{C}_+)$ in terms of angular derivative (see \cite[Theorem 3.1]{Elliot-Jury}). As a consequence of this caracterization, we have that the only linear fractional self-maps of $\mathbb{C}_+$
inducing bounded composition operators on $H^2(\mathbb{C}_+)$ are those of the following form
\begin{align}\label{1}
\phi(w)=a w+b, \quad \text{where} \ a>0 \ \text{and} \ \mathrm{Re}(b)\geq0.
\end{align}

Let $\mathcal{L}(\mathcal{H})$ denote the space of all bounded linear operators on a separable complex Hilbert space $\mathcal{H}.$ A \textit{conjugation} on $\mathcal{H}$ is a conjugate-linear
operator satisfying $C^2=I$ and $\left\langle Cx,Cy\right\rangle =\left\langle y,x\right\rangle $ for all $x,y\in \mathcal{H}$. An operator $T\in \mathcal{L}(\mathcal{H})$ is called \emph{cohyponormal} if $\left\| Tx\right\| \leq \left\| T^*x\right\| $ for all $x\in \mathcal{H},$ \textit{normal} if $TT^*=T^*T,$ \textit{self-adjoint} if $T=T^*,$ \textit{unitary} if $TT^*=I=T^*T$ and \textit{complex symmetric} (or \emph{$C$-symmetric}) if there is a conjugation $C$ on $\mathcal{H},$  for which $CT^*C=T.$ In \cite{Osmar}, Noor and Severiano studied the composition operators on $H^2(\mathbb{C}_+)$ induced by linear fractional self-maps of $\mathbb{C}_+.$ They completely characterize the symbols that induce complex symmetric composition operators (see next theorem) and provide a new prove to characterize normal, self-adjoint and unitary composition operators on $H^2(\mathbb{C}_+).$

\begin{thm}\label{2}{\cite[Theorems 2 and 6]{Osmar}} Let $\phi$ be as in \eqref{1}. Then
\begin{enumerate}
\item $C_{\phi}$ is normal on $H^2(\mathbb{C}_+)$ if and only if $a=1$ or $\mathrm{Re}(b)=0.$
\item $C_{\phi}$ is self-adjoint on $H^2(\mathbb{C}_+)$ if and only if $a=1$ and $b\geq 0.$
\item $C_{\phi}$ is unitary on $H^2(\mathbb{C}_+)$ if and only if $a=1$ and $\mathrm{Re}(b)=0.$
\item $C_{\phi}$ is complex symmetric on $H^2(\mathbb{C}_+)$ if and only if $C_{\phi}$ is normal on $H^2(\mathbb{C}_+).$
\end{enumerate}	
\end{thm}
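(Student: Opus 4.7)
My plan is to compute everything on the reproducing kernel $K_w(z)=\tfrac{1}{2\pi(z+\bar{w})}$ of $H^2(\mathbb{C}_+)$, whose span is dense. A direct substitution in $\phi(w)=aw+b$ yields
\begin{align*}
C_\phi K_w = \tfrac{1}{a}\,K_{(w+\bar{b})/a}, \qquad C_\phi^{\ast}K_w = K_{aw+b},
\end{align*}
the first by rewriting $K_w\circ\phi$ as a kernel and the second from $\langle f,C_\phi^{\ast}K_w\rangle=f(\phi(w))$. The first formula implies the adjoint identity $C_\phi^{\ast}=\tfrac{1}{a}C_{\psi}$, where $\psi(w)=(w+\bar{b})/a$ is itself a linear fractional self-map of $\mathbb{C}_+$ of the same form as $\phi$. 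These two formulas, together with the contractive dynamics of $\phi$ or $\psi$ on $\mathbb{C}_+$, will carry the entire argument.

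Parts (1)--(3) follow by direct substitution: one computes
\begin{align*}
C_\phi C_\phi^{\ast}K_w = \tfrac{1}{a}K_{w+2\mathrm{Re}(b)/a},\qquad C_\phi^{\ast}C_\phi K_w = \tfrac{1}{a}K_{w+2\mathrm{Re}(b)}.
\end{align*}
Since distinct reproducing kernels are linearly independent, the equality of these two expressions for every $w$ is equivalent to $\mathrm{Re}(b)(1-1/a)=0$, which proves (1). Similarly, $C_\phi K_w=C_\phi^{\ast}K_w$ for every $w$ forces $a=1$ and $b=\bar{b}\geq0$, proving (2); and $C_\phi C_\phi^{\ast}K_w=K_w$ forces $a=1$ and $\mathrm{Re}(b)=0$, proving (3).

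For (4), one direction is immediate from the Garcia--Putinar theorem: every normal operator on a separable complex Hilbert space is complex symmetric. For the converse, I would assume $C_\phi$ is complex symmetric but not normal; by (1) this forces $a\neq1$ and $\mathrm{Re}(b)>0$, and I would seek a contradiction via the fact that complex symmetry makes $\ker(C_\phi-\lambda)$ and $\ker(C_\phi^{\ast}-\bar{\lambda})$ equidimensional through $C$. If $a<1$, the map $\phi$ has fixed point $w_0=b/(1-a)\in\mathbb{C}_+$, so $C_\phi^{\ast}K_{w_0}=K_{w_0}$; a corresponding nonzero fixed vector of $C_\phi$ would satisfy $f=f\circ\phi^n$ for every $n$, and since $\phi^n(w)\to w_0$ with $f$ continuous, $f$ would be constant, hence identically zero in $H^2(\mathbb{C}_+)$. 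If $a>1$, the roles swap via the adjoint identity: $\psi$ has fixed point $\bar{b}/(a-1)\in\mathbb{C}_+$, so the relation $C_\phi K_{\bar{b}/(a-1)}=\tfrac{1}{a}K_{\bar{b}/(a-1)}$ (verified from the first kernel formula) produces an eigenvector of $C_\phi$ with eigenvalue $1/a$, and the partner eigenvector of $C_\phi^{\ast}=\tfrac{1}{a}C_\psi$ would again be forced constant by iterating $\psi^n\to\bar{b}/(a-1)$, hence zero.

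The main obstacle is the converse of (4) when $a>1$: here $\phi$ has no fixed point in $\mathbb{C}_+$, so there is no obvious eigenvector of $C_\phi^{\ast}$ of the form $K_{w_0}$ to start the dynamical argument from. What makes everything go through is the adjoint identity $C_\phi^{\ast}=\tfrac{1}{a}C_\psi$ noted at the outset, which transfers the fixed-point structure from $\phi$ to $\psi$; once this observation is in place, the same ``invariant function is constant'' argument applies to whichever of the two symbols happens to be a contraction of $\mathbb{C}_+$ with interior fixed point, and the obstruction to complex symmetry is uniform across the two non-normal subcases.
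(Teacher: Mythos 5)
Your proof is correct, but be aware that the paper itself does not prove Theorem \ref{2}: it is quoted from Noor--Severiano \cite{Osmar}, where part (4) is obtained by analysing cyclicity, and the present paper's own contribution is to rederive part (4) as a corollary of its cohyponormality characterization (Theorem \ref{101}), via the fact that a simultaneously cohyponormal and complex symmetric operator is normal. Your route is a third one, and arguably the most elementary. Parts (1)--(3) by kernel computations are standard and essentially the same in any treatment. For part (4) you replace both cyclicity and cohyponormality by the observation that a conjugation $C$ with $CC_\phi^*C=C_\phi$ maps $\ker(C_\phi^*-\bar\lambda)$ bijectively onto $\ker(C_\phi-\lambda)$, and you then kill the transported eigenfunction using the dynamics $\phi^n\to b/(1-a)$ (resp.\ $\psi^n\to\bar b/(a-1)$) together with the fact that $H^2(\mathbb{C}_+)$ contains no nonzero constants. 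This parallels the paper's proof of Theorem \ref{101}(a), but with an essential extra step: cohyponormality gives the inclusion $\ker(C_\phi^*-I)\subset\ker(C_\phi-I)$, so there $K_{w_0}$ itself must satisfy $K_{w_0}\circ\phi=K_{w_0}$ and one reads off $\phi=\mathrm{id}$ directly, whereas complex symmetry only produces \emph{some} nonzero invariant function, and your iteration-to-the-fixed-point argument is exactly what is needed to finish. Your handling of the case $a>1$, where $\phi$ has no interior fixed point and you instead start from the eigenvector $K_{\bar b/(a-1)}$ of $C_\phi$ with eigenvalue $1/a$ supplied by the adjoint identity, is the right fix and is what the paper accomplishes instead by passing to $C_\psi=aC_\phi^*$. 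Two cosmetic points: to promote the kernel identity $C_\phi^*K_w=\tfrac{1}{a}C_\psi K_w$ to an operator identity you should note that $C_\psi$ is itself bounded (its symbol again has the form \eqref{1}), and for part (3) unitarity formally requires $C_\phi^*C_\phi=I$ as well, though with $a=1$ and $\mathrm{Re}(b)=0$ this is immediate since $\psi=\phi^{-1}$.
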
	

Noor and Severiano \cite{Osmar}  study the complex symmetry of composition operators on $H^2(\mathbb{C}_+)$. The key of this study is to analyze  the cyclic behavior of the bounded composition operators induced by linear fractional self-maps of $\mathbb{C}_+$, which allowed them to characterize the operators that are complex symmetric. Despite this characterization, they did not exhibit the conjugations for the cases that these operators are complex symmetric.  

In the present paper, we investigate the bounded composition operators on $H^2(\mathbb{C}_+)$ induced by linear fractional self-maps of $\mathbb{C}_+.$ In Theorem \ref{101}, we completely characterize the cohyponormal composition operators induced by linear fractional self-maps of $\mathbb{C}_+.$
As a consequence of this characterization we obtain a new proof of \cite[Theorem 6]{Osmar}. 
For complex symmetry of composition operators on $H^2(\mathbb{C}_+),$ our main results are Theorems \ref{9} and \ref{100}, which provide concrete examples of conjugations for which linear fractional operators are complex symmetric.



\section{preliminaries} 
\subsection{The Hardy space $H^2(\mathbb{C}_+)$} The Hardy space of the right half-plane is the normed space
$$
H^2(\mathbb{C}_+):=\left\lbrace f\in H(\mathbb{C}_+):\left\| f\right\| =  \left(  \sup_{0<x<\infty}\frac{1}{\pi}\int _{-\infty}^{\infty}|f(x+iy)|^2dy\right) ^{1/2}  <\infty\right\rbrace,
$$
and the norm $\left\| \; \cdot \; \right\| $ is induced by an inner product, which makes $H^2(\mathbb{C}_+)$ a Hilbert space. For each $u\in \mathbb{C}_+,$ the function
\begin{align*}
K_{u}(w)=\frac{1}{\overline{u}+w}, \quad w\in \mathbb{C}_+
\end{align*} 
is called \emph{reproducing kernel} for $H^2(\mathbb{C}_+)$ at $u.$ This kernel function has the fundamental property: 
$$\langle f, K_u\rangle=f(u) \textrm{ for each } f\in H^2(\mathbb{C}_+).$$ If $\phi$ is an analytic self-map of $\mathbb{C}_+$ such that $C_{\phi}$ is bounded, then a simple computation gives 
\begin{align}\label{5}
C_{\phi}^*K_{u}=K_{\phi(u)}
\end{align}
for each $u\in \mathbb{C}_+.$

Recall that the classical Hardy space $H^2(\mathbb{U})$ of the unit disk is given by
$$
H^2(\mathbb{U}):=\left\lbrace f\in H(\mathbb{U}):\left\| f\right\|_{\mathbb{U}} =\left( \sup_{0<r<1}\frac{1}{2\pi}\int _{0}^{2\pi}|f(re^{i\theta})|^2d\theta\right)  ^{1/2}<\infty\right\rbrace. 
$$
The space $H^2(\mathbb{U})$ is a Hilbert space when endowed with the norm $\left\| \;\cdot \:\right\|_{\mathbb{U}}.$ 

Let $\gamma:\mathbb{U}\longrightarrow \mathbb{C}_+$ be the mapping defined by $z\mapsto (1-z)/(1+z),$ then $\gamma$ is a self-inverse bijection. The operator $V: H^2(\mathbb{U})\longrightarrow H^2(\mathbb{C}_+)$ defined by
\begin{align}\label{19}
\left( Vf\right) (w)=\frac{\sqrt{2}}{1+w}f\left( \frac{1-w}{1+w}\right) , \quad  f\in H^{2}(\mathbb{U}), w\in \mathbb{C}_+
\end{align}
is an isometric isomorphism whose inverse $V^{-1}:H^2(\mathbb{C}_+)\longrightarrow H^2(\mathbb{U}) $ is given by
\begin{align}
\left( V^{-1}g\right) (z)=\frac{\sqrt{2}}{1+z}g\left( \frac{1-z}{1+z}\right), \quad g \in H^2(\mathbb{C}_+), z\in \mathbb{U}
\end{align}
see \cite[p. 489]{Kumar}. 
\subsection{Linear fractional composition operators}
 We say that a bounded composition operator on $H^2(\mathbb{C}_+)$ is a \emph{linear fractional composition operator} if its inducing symbol has the form
\begin{align}\label{37}
\phi(w)=a w+ b, \ \text{where} \ a>0 \ \text{and} \ \mathrm{Re}(b)\geq 0.
\end{align}
The map $\phi$ is classified according to the value of $a:$
\begin{itemize}
\item $\phi$ is \emph{parabolic} if $a=1$ (and necessarily $b\neq 0$) and it is a
 \emph{parabolic automorphism} if additionally $\mathrm{Re}(b) = 0.$
\item $\phi$ is \emph{hyperbolic} if $a\in (0,1)\cup(1,\infty)$ and it is a  \emph{hyperbolic automorphism} if additionally  $\mathrm{Re}(b) = 0.$
\end{itemize}

In general, there is not a formula for adjoints of compositions operators. However, when the operator is a linear fractional composition operator on $H^2(\mathbb{C}_+)$, then a such formula exists (see \cite[Proposition 1]{Osmar}) as follows:


\begin{pps}\label{3} Let $\phi$ be as in \eqref{37}. Then the adjoint of $C_{\phi}$ on $H^2(\mathbb{C}_+)$ is given by
\begin{align*}
C_{\phi}^{*}=a^{-1}C_{\psi},
\end{align*}
where $\psi(w)=a^{-1}w+a^{-1}\overline{b}.$
\end{pps}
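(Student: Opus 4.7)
The plan is to verify the identity on the total set of reproducing kernels $\{K_u : u \in \mathbb{C}_+\}$ and then invoke density. First I would check that the candidate operator $a^{-1}C_\psi$ is well-defined and bounded on $H^2(\mathbb{C}_+)$: since $a>0$ we have $a^{-1}>0$, and since $\mathrm{Re}(a^{-1}\overline{b}) = a^{-1}\mathrm{Re}(b) \geq 0$, the symbol $\psi(w) = a^{-1}w + a^{-1}\overline{b}$ fits the template \eqref{37}, so $C_\psi$ is a bounded linear fractional composition operator on $H^2(\mathbb{C}_+)$ by the Elliot--Jury criterion.

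Next I would compute both sides on a reproducing kernel $K_u$. By \eqref{5} applied to $\phi$,
\begin{equation*}
C_\phi^* K_u = K_{\phi(u)} = K_{au+b}.
\end{equation*}
On the other hand, a direct computation gives
\begin{equation*}
\bigl(a^{-1}C_\psi K_u\bigr)(w) = \frac{a^{-1}}{\overline{u} + a^{-1}w + a^{-1}\overline{b}} = \frac{1}{a\overline{u} + \overline{b} + w} = \frac{1}{\overline{au+b} + w} = K_{au+b}(w).
\end{equation*}
Therefore $C_\phi^* K_u = a^{-1} C_\psi K_u$ for every $u \in \mathbb{C}_+$.

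Finally, the reproducing kernels $\{K_u : u \in \mathbb{C}_+\}$ have dense linear span in $H^2(\mathbb{C}_+)$ (any $f$ orthogonal to all of them satisfies $f(u) = \langle f, K_u\rangle = 0$ for every $u$, hence $f=0$). Since two bounded operators on $H^2(\mathbb{C}_+)$ that agree on a dense subset are equal, we conclude that $C_\phi^* = a^{-1}C_\psi$.

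There is no real obstacle here; the proof is essentially a one-line kernel calculation. The only conceptual point to take care of is confirming that $\psi$ itself is admissible (so that $C_\psi$ is bounded and the right-hand side makes sense as a bounded operator), and using the density of reproducing kernels to promote the kernel-wise equality to an operator identity.
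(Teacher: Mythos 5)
Your proof is correct: the admissibility check for $\psi$ (so that $C_\psi$ is bounded), the kernel computation showing $a^{-1}C_\psi K_u = K_{au+b} = C_\phi^* K_u$ (using that $a>0$ is real so $\overline{au+b}=a\overline{u}+\overline{b}$), and the density of the span of the reproducing kernels together give the operator identity. Note that the paper does not actually prove this proposition; it quotes it from \cite[Proposition 1]{Osmar}, and your kernel-plus-density argument is the standard route one would expect there, so there is nothing to flag.
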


\subsection{Complex symmetry} Let $\mathcal{H}$ and $\mathcal{H}_1$ be separable Hilbert spaces. In \cite[page 1291]{Garc2}, Garcia and Putinar showed that if $T\in \mathcal{L}(\mathcal{H})$ is $C$-symmetric, then there is a natural way to construct complex symmetric operators on $\mathcal{H}_1,$ just consider $T_1\in\mathcal{L}(\mathcal{H}_1)$ with $T_1=U^{-1}TU$ for some isometric isomorphism $U:\mathcal{H}_{1}\longrightarrow\mathcal{H}.$ We present this fact in the following result, which allow us directly conclude whether a given mapping is a conjugation or a complex symmetric operator:

\begin{lem}\label{065} Let $\mathcal{H}$ and $\mathcal{H}_1$ be separable Hilbert spaces. If $T\in\mathcal{L}(\mathcal{H})$ is $C$-symmetric and $T_1\in\mathcal{L}(\mathcal{H}_1)$ with $T_1=U^{-1}TU$ for some isometric isomorphism $U:\mathcal{H}_{1}\longrightarrow\mathcal{H}$, then
\begin{enumerate}
\item[(a)]\label{6} $U_1^{-1}CU_1$ is a conjugation on $\mathcal{H}_{1},$ for any isometric isomorphism $U_1:\mathcal{H}_{1}\longrightarrow\mathcal{H}.$
\item[(b)] \label{7} $T_{1}$ is $U^{-1}CU$-symmetric.	
\end{enumerate}	
\end{lem}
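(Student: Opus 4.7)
The plan is to verify everything by direct computation, using only the defining properties of a conjugation and the fact that an isometric isomorphism $U : \mathcal{H}_1 \to \mathcal{H}$ satisfies $U^{-1} = U^*$ and preserves inner products.

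For part (a), I set $C' := U_1^{-1} C U_1$ and check the three defining properties of a conjugation in order. Conjugate-linearity follows by pulling scalars through $U_1$ (linear), then through $C$ (producing complex conjugates), then through $U_1^{-1}$ (linear again). The involutive property $(C')^2 = I$ collapses telescopically since $U_1 U_1^{-1} = I$ in the middle, leaving $U_1^{-1} C^2 U_1 = U_1^{-1} U_1 = I$. For the isometric anti-linear pairing, I use that $U_1$ and $U_1^{-1}$ are both isometric isomorphisms, so applying them preserves inner products, which reduces $\langle C'x, C'y \rangle$ to $\langle C U_1 x, C U_1 y \rangle = \langle U_1 y, U_1 x \rangle = \langle y, x \rangle$.

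For part (b), I first use that $U$ is an isometric isomorphism (so $U^* = U^{-1}$) to compute
\begin{align*}
T_1^* = (U^{-1} T U)^* = U^* T^* (U^{-1})^* = U^{-1} T^* U.
\end{align*}
With $C_1 := U^{-1} C U$, which is a conjugation by part (a), I then multiply out
\begin{align*}
C_1 T_1^* C_1 = (U^{-1} C U)(U^{-1} T^* U)(U^{-1} C U) = U^{-1}(C T^* C) U = U^{-1} T U = T_1,
\end{align*}
where the middle equality uses $C T^* C = T$ (the $C$-symmetry of $T$) and the cancellations $U U^{-1} = I$.

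There is no real obstacle here: the proof is a mechanical verification. The only subtle point worth flagging is that in part (a) the isometric isomorphism $U_1$ is \emph{arbitrary} and need not coincide with the $U$ intertwining $T$ and $T_1$; this is why part (a) is a purely structural statement about conjugations, independent of the operators involved, while part (b) specifically requires the same $U$ in order for the algebraic cancellation to take place.
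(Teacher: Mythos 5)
Your proof is correct and complete. The paper itself supplies no proof of this lemma---it is stated as a fact imported from Garcia and Putinar---so your direct verification (checking conjugate-linearity, involutivity, and the isometric pairing for $U_1^{-1}CU_1$, then the telescoping computation $C_1T_1^*C_1 = U^{-1}(CT^*C)U = T_1$ using $U^*=U^{-1}$) is exactly the standard argument and correctly fills in the omitted details.
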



\section{Cohyponormality}
In this section, we completely characterize the linear fractional composition operators that are cohyponormal on $H^2(\mathbb{C}_+).$ Since $C_{\phi}$ is cohyponormal, when $\phi(w)=aw+b$ with $a=1$ or $\mathrm{Re}(b)=0$ (Theorem \ref{2}), then it is enough to consider the case $a\in (0,1)\cup(1, \infty)$ and $\mathrm{Re}(b)>0$. 

\begin{thm}\label{101}
Let $\phi$ be the self-map of $\mathbb{C}_+$ defined by $\phi(w)=aw+b.$ 
\begin{enumerate}
\item[(a)] \label{200} If $a\in (0,1)$ and $\mathrm{Re}(b)> 0,$ then $C_{\phi}$ is not cohyponormal.

\item[(b)] \label{201} If $a\in (1, \infty)$ and $\mathrm{Re}(b)> 0,$ then $C_{\phi}$ is cohyponormal.
\end{enumerate}
\end{thm}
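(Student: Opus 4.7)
The plan is to handle the two parts separately, exploiting that (a) is a \emph{negative} statement (one test vector suffices) while (b) is a \emph{positive} statement (requires an operator inequality). For (a) I would test cohyponormality on reproducing kernels, which is natural since \eqref{5} already gives $C_\phi^* K_u = K_{\phi(u)}$. For (b), using Proposition \ref{3} the self-commutator $C_\phi C_\phi^* - C_\phi^* C_\phi$ becomes an explicit difference of translation composition operators $T_c := C_{w \mapsto w + c}$, and the task reduces to a positivity statement about this one-parameter family.

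For part (a), a direct computation gives $(C_\phi K_u)(w) = (\bar u + aw + b)^{-1} = a^{-1} K_v(w)$ where $v = a^{-1}(u + \bar b) \in \mathbb{C}_+$, so $\|C_\phi K_u\|^2 = [2a(\mathrm{Re}(u) + \mathrm{Re}(b))]^{-1}$, while $\|C_\phi^* K_u\|^2 = [2(a\mathrm{Re}(u) + \mathrm{Re}(b))]^{-1}$. Their difference has the sign of $(1-a)\mathrm{Re}(b)$, which is strictly positive under the hypotheses of (a), so $\|C_\phi K_u\| > \|C_\phi^* K_u\|$ and cohyponormality fails. For part (b), Proposition \ref{3} gives $C_\phi^* = a^{-1} C_\psi$ with $\psi(w) = a^{-1}w + a^{-1}\bar b$, and the composition rule $C_\alpha C_\beta = C_{\beta \circ \alpha}$ yields $\psi \circ \phi(w) = w + 2a^{-1}\mathrm{Re}(b)$ and $\phi \circ \psi(w) = w + 2\mathrm{Re}(b)$. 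Hence
\[
C_\phi C_\phi^* = a^{-1} T_{2a^{-1}\mathrm{Re}(b)}, \qquad C_\phi^* C_\phi = a^{-1} T_{2\mathrm{Re}(b)}.
\]
Using the semigroup law $T_c T_d = T_{c+d}$ and $a > 1$, the self-commutator factors as
\[
C_\phi C_\phi^* - C_\phi^* C_\phi = a^{-1} T_{2a^{-1}\mathrm{Re}(b)} \left( I - T_{2(1-a^{-1})\mathrm{Re}(b)} \right),
\]
a product of two commuting positive self-adjoint operators, hence positive, which gives cohyponormality.

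The main technical point is establishing $0 \leq T_c \leq I$ for real $c > 0$. Self-adjointness $T_c^* = T_c$ follows immediately from Proposition \ref{3} applied with $a = 1$ and real $b = c$; positivity then follows from $T_c = T_{c/2}^* T_{c/2}$. The remaining bound $T_c \leq I$ requires either the standard Hardy-space fact that $x \mapsto \|f(x + i\,\cdot)\|_{L^2(\mathbb{R})}$ is non-increasing on $(0,\infty)$ (yielding $\|T_c\| \leq 1$), or, more conceptually, the Paley--Wiener unitary equivalence $H^2(\mathbb{C}_+) \cong L^2(0,\infty)$ under which $T_c$ becomes multiplication by $e^{-ct}$, making both the bound $0 \leq T_c \leq I$ and the monotonicity $T_{c_1} \geq T_{c_2}$ for $0 \leq c_1 \leq c_2$ pointwise transparent. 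This is the only substantive step beyond bookkeeping.
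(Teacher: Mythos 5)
Your proof is correct, but both halves take a genuinely different route from the paper. For (a), the paper picks the single fixed point $u_0=b(1-a)^{-1}\in\mathbb{C}_+$ of $\phi$, notes that $K_{u_0}$ is then an eigenvector of $C_\phi^*$ with eigenvalue $1$, and invokes the kernel inclusion $\mathrm{Ker}(C_\phi^*-\overline{\lambda}I)\subset\mathrm{Ker}(C_\phi-\lambda I)$ valid for cohyponormal operators to force $\phi=\mathrm{id}$, a contradiction; your argument instead computes $\|C_\phi K_u\|^2=[2a(\mathrm{Re}(u)+\mathrm{Re}(b))]^{-1}$ and $\|C_\phi^*K_u\|^2=[2(a\mathrm{Re}(u)+\mathrm{Re}(b))]^{-1}$ for \emph{every} $u$ and reads off the sign of the difference from $(1-a)\mathrm{Re}(b)$. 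Yours is more elementary (no spectral fact about cohyponormal operators needed) and more quantitative, exhibiting the failure on every kernel rather than one. For (b), the paper works directly with the $\sup$--integral definition of the $H^2(\mathbb{C}_+)$ norm and two changes of variables, showing $\|C_\phi f\|^2\le\sup_{a^{-1}\mathrm{Re}(b)<u}\int|f(u+iv)|^2\,dv=\|C_\phi^*f\|^2$; you instead compute the self-commutator exactly as $a^{-1}T_{2a^{-1}\mathrm{Re}(b)}\bigl(I-T_{2(1-a^{-1})\mathrm{Re}(b)}\bigr)$ and reduce everything to the statement $0\le T_c\le I$ for $c>0$, which is transparent under the Paley--Wiener identification (where $T_c$ is multiplication by $e^{-ct}$). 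The trade-off: the paper's computation is self-contained given the norm definition, whereas your argument imports the contractivity of the translation semigroup (morally the same Hardy-space monotonicity the paper's change of variables exploits) but in exchange yields a sharper structural conclusion --- an explicit factorization of $C_\phi C_\phi^*-C_\phi^*C_\phi$ as a product of commuting positive operators --- and makes the dichotomy between $a<1$ and $a>1$ conceptually evident. Both parts of your write-up check out, including the composition convention $C_\alpha C_\beta=C_{\beta\circ\alpha}$ and the semigroup identities.
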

\begin{proof}
(a) First observe that $u=b(1-a)^{-1}$ is a fixed point of $\phi$ in $\mathbb{C}_+.$ By \eqref{5}, it follows that $K_{u}$ is an eigenvector for $C_{\phi}^*.$ Now, suppose that $C_{\phi}$ is cohyponormal. Since $C_{\phi}$ is cohyponormal, $\mathrm{Ker}(C_{\phi}^*-\overline{\lambda} I)\subset \mathrm{Ker}(C_{\phi}-\lambda I)$ for all $\lambda \in \mathbb{C}.$ Thus $K_{u}$ is an eigenvector for $C_{\phi}$ corresponding to the eigenvalue $1.$ Then a simple computation gives  
 $\phi(w)=w$ for all $w\in \mathbb{C}_+,$ which implies 
 $a=1$ and $b=0.$ This contradiction shows that $C_{\phi}$ is not cohyponormal.

(b) Let $f\in H^2(\mathbb{C}_+)$ and $a\in (1,\infty).$ Considering the change of variables $u=ax+\mathrm{Re}(b)$ and $v=ay+\mathrm{Im}(b),$ we obtain
\begin{align}\label{34}
\left\| C_{\phi}f\right\| ^2&=
\sup_{0<x<\infty}\int_{-\infty}^{\infty} |f(ax+\mathrm{Re}(b)+i(ay+\mathrm{Im}(b))|^2dy\\\nonumber
&=\sup_{\mathrm{Re}(b)<u<\infty} \int_{-\infty}^{\infty} \frac{1}{a}|f(u+iv)|^2dv\\\nonumber
&\leq \sup_{a^{-1}\mathrm{Re}(b)<u<\infty} \int_{-\infty}^{\infty} \frac{1}{a}|f(u+iv)|^2dv\\\nonumber
&\leq \sup_{a^{-1}\mathrm{Re}(b)<u<\infty} \int_{-\infty}^{\infty} |f(u+iv)|^2dv.
\end{align}	
On the other hand, by Proposition \ref{3} we have $C_{\phi}^*f=a^{-1}C_{\psi}f.$ Thus the change of variables $r=a^{-1}x+a^{-1}\mathrm{Re}(b)$ and $s=a^{-1}y-a^{-1}\mathrm{Im}(b)$ gives
\begin{align}\label{33}
\left\| C_{\phi}^*f\right\| ^2&=a^{-1}\sup_{0<x<\infty}\int_{-\infty}^{\infty} |f(a^{-1}(x+iy)+a^{-1}\overline{b}|^2dy\\\nonumber
&= a^{-1
}\sup_{0<x<\infty}\int_{-\infty}^{\infty} |f(a^{-1}x+a^{-1}\mathrm{Re}(b)+i(a^{-1}y-a^{-1}\mathrm{Im}(b))|^2dy\\\nonumber
&=  a^{-1
}\sup_{a^{-1}\mathrm{Re}(b)<r<\infty}\int_{-\infty}^{\infty} a|f(r+is)|^2ds\\\nonumber
& =\sup_{a^{-1}\mathrm{Re}(b)<r<\infty}\int_{-\infty}^{\infty} |f(r+is)|^2ds.
\end{align}	
From \eqref{34} and \eqref{33}, we have $\left\| C_{\phi}f\right\| \leq \| C_{\phi}^*f\|,$ for all $f\in H^2(\mathbb{C}_+).$
\end{proof}

Combining Theorem \ref{2} and Theorem \ref{101}, we obtain

\begin{cor} Let $\phi$ be the self-map of $\mathbb{C}_+$ defined by $\phi(w)=aw+b.$ 
Then $C_{\phi}$ is cohyponormal on $H^2(\mathbb{C}_+)$ if and only if $a\geq 1$ or $\mathrm{Re}(b)=0.$ 
\end{cor}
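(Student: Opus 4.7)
The plan is to prove the corollary by straightforward case analysis that merges the three regimes treated in Theorems \ref{2} and \ref{101}. First I would set up the dichotomy on the parameters: either $a < 1$ with $\mathrm{Re}(b) > 0$, or $a \geq 1$, or $\mathrm{Re}(b) = 0$. The statement asserts that the last two regimes yield cohyponormal operators while the first does not, so the only real work is to explain why each of the ``positive'' cases follows from a result already on the page.

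For the forward direction (assuming cohyponormality), I would argue contrapositively: if neither $a \geq 1$ nor $\mathrm{Re}(b) = 0$ holds, then $a \in (0,1)$ and $\mathrm{Re}(b) > 0$, and Theorem \ref{101}(a) gives that $C_{\phi}$ is not cohyponormal. For the reverse direction I would split into three subcases:
\begin{itemize}
\item If $a = 1$, then Theorem \ref{2}(1) gives that $C_{\phi}$ is normal, hence cohyponormal (since $\|Tx\| = \|T^*x\|$ for normal $T$).
\item If $\mathrm{Re}(b) = 0$, then again Theorem \ref{2}(1) applies and $C_{\phi}$ is normal, hence cohyponormal.
\item If $a > 1$ and $\mathrm{Re}(b) > 0$, then Theorem \ref{101}(b) directly gives cohyponormality.
\end{itemize}
These three subcases exhaust the hypothesis ``$a \geq 1$ or $\mathrm{Re}(b) = 0$'', completing the equivalence.

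There is no real obstacle here since both of the ingredient theorems have already been established earlier in the paper. The only small point worth mentioning explicitly is the implication ``normal $\Rightarrow$ cohyponormal'', which is immediate from the definition $TT^* = T^*T$, and the boundary case $a = 1$ with $\mathrm{Re}(b) = 0$, which satisfies both disjuncts simultaneously but causes no difficulty in the argument.
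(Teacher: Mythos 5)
Your proposal is correct and is exactly the argument the paper intends: the corollary is stated with no written proof beyond ``Combining Theorem \ref{2} and Theorem \ref{101}, we obtain,'' and your case analysis (normality from Theorem \ref{2} when $a=1$ or $\mathrm{Re}(b)=0$, Theorem \ref{101}(b) when $a>1$ and $\mathrm{Re}(b)>0$, and Theorem \ref{101}(a) for the contrapositive of the forward direction) is precisely the combination being invoked. Nothing is missing.
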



\begin{cor}
For $a>0$ and $\mathrm{Re}(b)\geq 0,$ let $\phi$ be the self-map of $\mathbb{C}_+$ defined by $\phi(w)=aw+b.$ Then $C_{\phi}$ is complex symmetric on $H^2(\mathbb{C}_+)$ if and only if $a=1$ or $\mathrm{Re}(b)=0.$
\end{cor}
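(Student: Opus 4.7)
The plan is to derive this corollary by combining the normality characterization in Theorem \ref{2}(1) with the cohyponormality result Theorem \ref{101}, via the general principle that a complex symmetric cohyponormal operator is automatically normal. That reduces the corollary to a case check rather than an independent construction.

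For the sufficiency direction, I would assume $a=1$ or $\mathrm{Re}(b)=0$ and invoke Theorem \ref{2}(1) to conclude that $C_\phi$ is normal on $H^2(\mathbb{C}_+)$; since every normal operator on a separable Hilbert space is complex symmetric (the classical theorem of Garcia and Putinar), $C_\phi$ is complex symmetric.

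For the necessity, I would argue by contradiction. Suppose $C_\phi$ is complex symmetric yet $a\neq 1$ and $\mathrm{Re}(b)>0$. If $a>1$, Theorem \ref{101}(b) gives that $C_\phi$ is cohyponormal, so the auxiliary principle below forces $C_\phi$ to be normal, contradicting Theorem \ref{2}(1). If $0<a<1$, I pass to the adjoint: Proposition \ref{3} yields $C_\phi^*=a^{-1}C_\psi$ with $\psi(w)=a^{-1}w+a^{-1}\overline{b}$, where $a^{-1}>1$ and $\mathrm{Re}(a^{-1}\overline{b})>0$, so Theorem \ref{101}(b) makes $C_\psi$ cohyponormal, and scaling by the positive constant $a^{-1}$ preserves cohyponormality; hence $C_\phi^*$ is cohyponormal. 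Complex symmetry of $C_\phi$ passes to $C_\phi^*$ with the same conjugation (since $T=CT^*C$ implies $T^*=CTC$), so the auxiliary principle applied to $C_\phi^*$ makes it normal; taking adjoints once more, $C_\phi$ is normal---again contradicting Theorem \ref{2}(1).

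The only non-bookkeeping step, and the one I would record carefully, is the auxiliary principle: a complex symmetric cohyponormal operator is normal. Starting from $T=CT^*C$ and the fact that the conjugation $C$ is isometric on norms, one obtains $\|Tx\|=\|CT^*Cx\|=\|T^*Cx\|$ for every $x$. Applying the cohyponormality inequality $\|Ty\|\le\|T^*y\|$ at $y=Cx$ and using $T^*C=CT$ yields $\|T^*x\|=\|TCx\|\le\|T^*Cx\|=\|Tx\|$, which combined with cohyponormality gives $\|Tx\|=\|T^*x\|$, i.e., $T$ is normal. Once this short observation is in hand, the corollary is a straightforward assembly of Theorem \ref{2}(1), Theorem \ref{101}(b), and Proposition \ref{3}.
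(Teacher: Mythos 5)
Your proposal is correct and follows essentially the same route as the paper: combine the normality characterization of Theorem \ref{2} with the cohyponormality result of Theorem \ref{101}, handling the case $a\in(0,1)$ by passing to the adjoint $C_\phi^*=a^{-1}C_\psi$, and using that a cohyponormal complex symmetric operator is normal. The only difference is that you spell out the proof of that last auxiliary principle (correctly, via $TC=CT^*$ and the isometry of $C$), whereas the paper invokes it without proof.
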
 
\begin{proof} Suppose that $C_{\phi}$ is complex symmetric. If $a\in (0,1),$ then by Theorem \ref{101} $aC_{\phi}^*=C_{\psi}$ is cohyponormal, where $\psi(w)=a^{-1}w+a^{-1}\overline{b}.$ Since $C_{\psi}$ is simultaneously cohyponormal and complex symmetric, then $C_{\psi}$ is normal. By Theorem \ref{2}, we obtain $a^{-1}=1$ or $\mathrm{Re}(a^{-1}\overline{b})=0.$ Since $a\in (0,1),$ we have $\mathrm{Re}(b)=0.$ Similarly, if $a\geq 1$ then $C_{\phi}$ is simultaneously cohyponormal and complex symmetric, and hence $C_{\phi}$ is normal. Therefore, $a=1$ or $\mathrm{Re}(b)=0.$ Conversely, since normal operators are complex symmetric, it follows from Theorem \ref{2} that $C_{\phi}$ is complex symmetric whenever $a=1$ or $\mathrm{Re}(b)=0.$ 
\end{proof}

\section{Complex Symmetry} 

In this section, we deal with the problem of exhibit conjugations for the linear fractional composition operators that are complex symmetric.

\subsection{Parabolic self-maps of  $\mathbb{C}_+$} 

Here we explicit a concrete conjugation $C$ for which the compositions operators induced by parabolic symbols are $C$-symmetric.




Let $J: H^2(\mathbb{C}_+)\longrightarrow H^2(\mathbb{C}_+)$ be the mapping defined by 
\begin{align}\label{103}
\left( Jf\right) (w)=\overline{f\left( \overline{w}\right) }, \quad  f \in H^2(\mathbb{C}_+), w\in \mathbb{C}_+.
\end{align}
It is easy to check that $J$ is a conjugation.

Next result shows that the composition operators induced by parabolic symbols are $J$-symmetric.

\begin{thm}\label{9}
Let $\phi$ be an analytic self-map of $\mathbb{C}_+$ such that $C_{\phi}$ is bounded on $H^2(\mathbb{C}_+).$ Then $C_{\phi}$ is $J$-symmetric if and only if $\phi$ is a parabolic symbol.
\end{thm}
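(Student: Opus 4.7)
My plan is to test the defining identity $J C_\phi = C_\phi^* J$ against the reproducing kernels of $H^2(\mathbb{C}_+)$, using the formula $C_\phi^* K_u = K_{\phi(u)}$ from \eqref{5}. A preliminary calculation I would record first is that
\[
(J K_u)(w) = \overline{K_u(\overline{w})} = \overline{\frac{1}{\overline{u}+\overline{w}}} = \frac{1}{u+w} = K_{\overline{u}}(w),
\]
so $J K_u = K_{\overline{u}}$ (note that $\overline{u}\in\mathbb{C}_+$ whenever $u\in\mathbb{C}_+$).

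For the easy direction, assume $\phi$ is parabolic, so $\phi(w)=w+b$ with $\operatorname{Re}(b)\geq 0$ and $b\neq 0$. By Proposition \ref{3}, $C_\phi^{*}=C_\psi$ with $\psi(w)=w+\overline{b}$. Then for any $f\in H^2(\mathbb{C}_+)$,
\[
(J C_\phi^* J f)(w) = \overline{(J f)(\overline{w}+\overline{b})} = \overline{\,\overline{f(w+b)}\,} = (C_\phi f)(w),
\]
which proves $C_\phi = J C_\phi^* J$, i.e., $C_\phi$ is $J$-symmetric.

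For the converse, assume $J C_\phi = C_\phi^* J$ and apply both sides to $K_u$. The right-hand side becomes $C_\phi^{*}(J K_u) = C_\phi^{*} K_{\overline{u}} = K_{\phi(\overline{u})}$. For the left-hand side I would compute
\[
(J C_\phi K_u)(w) = \overline{K_u(\phi(\overline{w}))} = \frac{1}{u+\overline{\phi(\overline{w})}},
\]
and matching this with $K_{\phi(\overline{u})}(w) = 1/(\overline{\phi(\overline{u})}+w)$ for all $u,w\in\mathbb{C}_+$ yields
\[
u+\overline{\phi(\overline{w})} \;=\; w+\overline{\phi(\overline{u})}, \qquad u,w\in\mathbb{C}_+.
\]
Fixing $u$ shows $w\mapsto \overline{\phi(\overline{w})}-w$ is a constant $c\in\mathbb{C}$, hence $\phi(z)=z+\overline{c}$ on $\mathbb{C}_+$. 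Since $\phi$ is a self-map of $\mathbb{C}_+$, we have $\operatorname{Re}(\overline{c})\geq 0$, so $\phi$ is parabolic (setting $b=\overline{c}$).

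The step I expect to be the most delicate is the last one: rigorously passing from the pointwise identity of the two kernel expressions to the functional equation for $\phi$, and then handling the borderline possibility $b=0$ (the identity map), which also induces a trivially $J$-symmetric operator; consistent with the paper's convention that a parabolic symbol requires $b\neq 0$, I would record this as a degenerate case and confirm that otherwise $\phi$ is genuinely parabolic. Apart from this bookkeeping, the argument is a direct computation with kernels and requires no deep machinery.
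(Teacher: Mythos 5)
Your proof is correct and takes essentially the same route as the paper's: both directions are handled by testing the complex-symmetry identity on the reproducing kernels via $JK_u=K_{\overline{u}}$ and $C_\phi^*K_u=K_{\phi(u)}$ (the paper simply specializes to $u=1$ where you keep $u$ general), together with a direct computation for the ``if'' direction. Your flagged edge case $b=0$ (the identity map, which is trivially $J$-symmetric but excluded by the paper's convention that a parabolic symbol has $b\neq 0$) is a genuine, if minor, imprecision in the theorem's statement that the paper's own proof also glosses over, and your handling of it is appropriate.
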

\begin{proof} Suppose that $C_{\phi}$ is $J$-symmetric. Then  $JC_{\phi}^*K_{u}=C_{\phi}JK_{u}
$ for each $u\in \mathbb{C}_+.$ Since $JK_{u}=K_{\overline{u}},$ it follows that
\begin{align}\label{15}
K_{\overline{u}}(\phi(w))=K_{\overline{\phi(u)}}(w), \quad  w\in \mathbb{C}_+.
\end{align}
If $u=1$ in \eqref{15}, then
\begin{align*}
\frac{1}{ 1+\phi(w) }=\frac{1}{ \phi(1) + w },
\end{align*}
and hence $\phi(w)=w+\overline{\phi(1)}-1,$ for all $w\in \mathbb{C}_+.$ 

Conversely, suppose that $\phi$ is a parabolic symbol, that is $\phi(w)=w+b$ with $\mathrm{Re}(b)\geq 0.$ Then $(C_{\phi}^*f)(w)=f(w+\overline{b})$, for each $f\in H^2(\mathbb{C}_+),w\in \mathbb{C}_+.$ Hence
\begin{align*}
\left( JC_{\phi}Jf\right) (w)=\overline{C_{\phi}Jf(\overline{w})}=\overline{Jf(\phi(\overline{w}))}=f(w+\overline{b})=(C_{\phi}^*f)(w),
\end{align*}	
which shows that $C_{\phi}$ is $J$-symmetric.	
\end{proof}

\subsection{Hyperbolic automorphism of $\mathbb{C}_+$} 
\label{8}

By Theorem \ref{2}, among the hyperbolic symbols only the automorphisms induce complex symmetric composition operators. Here, we exhibit a concrete conjugation $C$ for which these operators are $C$-symmetric.

Narayan \emph{et al.} \cite{Narayan} showed that if $c\in (-1,1),$ $$\Psi_c(z):=\frac{\sqrt{1-|c|^2}}{1-cz}, \Phi_c(z):=\frac{c-z}{1-cz}$$ and $J_{\mathbb{U}}$ is the conjugation defined on $H^2(\mathbb{U})$ by $(J_{\mathbb{U}}f)(z)=\overline{f(\overline{z})}$ then $J_{\mathbb{U}}T_{\Psi_c}C_{\Phi_c}$ is a conjugation on $H^2(\mathbb{U}).$ By Lemma \ref{065}, the mapping 
\begin{align}\label{10}
W_c:=VJ_{\mathbb{U}}T_{\Psi_c}C_{\Phi_c}V^{-1}
\end{align} 
is a conjugation on $H^2(\mathbb{C}_+).$ In the following result we determine an expression for $W_c$ when $c=0.$
\begin{pps}
If $c=0$ in \eqref{10} then 
\begin{align}
(W_0f)(w)=\frac{1}{w}\overline{f\left( \frac{1}{\overline{w}}\right) },\quad f\in H^2(\mathbb{C}_+), w\in \mathbb{C}_+.
\end{align}
\end{pps}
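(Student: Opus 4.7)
The plan is to substitute $c=0$ into the definitions of $\Psi_c$ and $\Phi_c$ and then unwind the composition $W_0 = VJ_{\mathbb{U}}T_{\Psi_0}C_{\Phi_0}V^{-1}$ applied to an arbitrary $f\in H^2(\mathbb{C}_+)$. Since $\Psi_0(z)=1$, the multiplication operator $T_{\Psi_0}$ is the identity, and since $\Phi_0(z)=-z$, the composition operator $C_{\Phi_0}$ acts by $g\mapsto g(-\,\cdot\,)$. So I only need to compute the three-fold composition $VJ_{\mathbb{U}}C_{\Phi_0}V^{-1}$.

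Next I would apply the four operators step by step to $f$. Starting from $(V^{-1}f)(z)=\frac{\sqrt{2}}{1+z}f\!\left(\frac{1-z}{1+z}\right)$, precomposition with $-z$ gives $\frac{\sqrt{2}}{1-z}f\!\left(\frac{1+z}{1-z}\right)$. Applying $J_{\mathbb{U}}$, i.e.\ taking complex conjugate after $z\mapsto\bar z$, yields $\frac{\sqrt{2}}{1-z}\,\overline{f\!\left(\frac{1+\bar z}{1-\bar z}\right)}$, where the prefactor becomes unconjugated because $\overline{\sqrt{2}/(1-\bar z)}=\sqrt{2}/(1-z)$. Finally, applying $V$ means multiplying by $\sqrt{2}/(1+w)$ and evaluating at $z=(1-w)/(1+w)$.

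The remaining step is the algebraic simplification produced by the Cayley substitution $z=(1-w)/(1+w)$. The two identities I would record are $1-z=\frac{2w}{1+w}$ and, replacing $w$ by $\bar w$, the relation $\frac{1+\bar z}{1-\bar z}=\frac{1}{\bar w}$. Using the first, the prefactor $\frac{\sqrt{2}}{1+w}\cdot\frac{\sqrt{2}}{1-z}$ collapses to $\frac{2}{1+w}\cdot\frac{1+w}{2w}=\frac{1}{w}$; using the second, the argument of $f$ becomes $1/\bar w$. Putting these together gives exactly
\[
(W_0f)(w)=\frac{1}{w}\,\overline{f\!\left(\frac{1}{\bar w}\right)}.
\]

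I do not anticipate a substantive obstacle here: the proof is a straightforward computation, and the only thing to watch carefully is the bookkeeping of conjugations at the $J_{\mathbb{U}}$ step (making sure $z$ is first replaced by $\bar z$ inside $f$ and then the entire value is conjugated) together with the verification of the Cayley identity $(1+\bar z)/(1-\bar z)=1/\bar w$, which is a one-line algebraic check.
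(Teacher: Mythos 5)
Your computation is correct and follows essentially the same route as the paper's proof: reduce $T_{\Psi_0}C_{\Phi_0}$ to $C_{\Phi_0}$, unwind $VJ_{\mathbb{U}}C_{\Phi_0}V^{-1}$ step by step, and simplify via the identities $1-\gamma^{-1}(w)=\tfrac{2w}{1+w}$ and $\tfrac{1+\overline{\gamma^{-1}(w)}}{1-\overline{\gamma^{-1}(w)}}=\tfrac{1}{\overline{w}}$. The only difference is cosmetic (you build up the intermediate functions explicitly rather than evaluating from the outside in), and your handling of the conjugation at the $J_{\mathbb{U}}$ step is correct.
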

\begin{proof}
If $c=0$ then $\Psi_0\equiv 1$ and $\Phi_{0}(z)=-z$ for each $z\in \mathbb{U}.$ Hence
$T_{\Psi_0}C_{\Phi_0}=C_{\Phi_0}.$ Now observe that for each $f\in H^2(\mathbb{C}_+)$ and $w\in \mathbb{C}_+,$ we have
\begin{align*}
(W_0f)(w)&=(VJ_{\mathbb{U}}C_{\Phi_0}V^{-1}f)(w)= \frac{\sqrt{2}}{1+w}J_{\mathbb{U}}C_{\Phi_0}V^{-1}f\left( \gamma^{-1}(w)\right) \\
&= \frac{\sqrt{2}}{1+w}\overline{C_{\Phi_0}V^{-1}f\left(\overline{ \gamma^{-1}(w)}
\right)}=  \frac{\sqrt{2}}{1+w}\overline{V^{-1}f\left(-\overline{ \gamma^{-1}(w)}
	\right)}\\
&=\frac{\sqrt{2}}{1+w}\frac{\sqrt{2}}{1- \gamma^{-1}(w)}\overline{f\left(\gamma(-\overline{ \gamma^{-1}(w)})
	\right)}.
\end{align*}
Since 
\begin{align*}
1-\gamma^{-1}(w)=1-\frac{1-w}{1+w}=\frac{1+w-1+w}{1+w}=\frac{2w}{1+w}
\end{align*}
and 
\begin{align*}
\tau(-\overline{ \gamma^{-1}(w)})=\frac{1+\overline{ \gamma^{-1}(w)}}{1-\overline{ \gamma^{-1}(w)}}=\frac{1+\frac{1-\overline{w}}{1+\overline{w}}}{1-\frac{1-\overline{w}}{1+\overline{w}}}=\frac{1+\overline{w}+1-\overline{w}}{1+\overline{w}-1+\overline{w}}=\frac{1}{\overline{w}}
\end{align*}
we obtain 
\begin{align*}
(W_0f)(w)=\frac{\sqrt{2}}{1+w}\frac{\sqrt{2}(1+w)}{2w}\overline{f\left( \frac{1}{\overline{w}}\right) }=\frac{1}{w}\overline{f\left( \frac{1}{\overline{w}}\right) }.
\end{align*}
This completes the proof.
\end{proof}

In the next proposition, we completely describe the $W_0$-symmetric operators $C_{\phi}.$

\begin{thm}\label{26} Let $\phi$ be an analytic self-map of $\mathbb{C}_+$ such that $C_{\phi}$ is bounded on $H^2(\mathbb{C}_+).$ Then $C_{\phi}$ is $W_0$-symmetric if and only if $\phi(w)=a w$ with $a>0.$
\end{thm}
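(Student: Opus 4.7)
The plan is to reformulate ``$C_\phi$ is $W_0$-symmetric'' as the operator identity $W_0 C_\phi^* = C_\phi W_0$ (using that $W_0^2=I$ since $W_0$ is a conjugation) and then test this identity on the total family of reproducing kernels $\{K_u : u \in \mathbb{C}_+\}$ for the necessity direction.

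The crucial preliminary computation is that, for every $u,w \in \mathbb{C}_+$,
$(W_0 K_u)(w) = \frac{1}{w}\,\overline{K_u(1/\overline{w})} = \frac{1}{1+uw}$, which I obtain by expanding $K_u(z) = 1/(\overline{u}+z)$ and simplifying. Combined with the standard identity $C_\phi^* K_u = K_{\phi(u)}$ from \eqref{5}, the required operator equality, evaluated on $K_u$, reduces to the scalar equation $\frac{1}{1+\phi(u)w} = \frac{1}{1+u\phi(w)}$ valid for all $u,w \in \mathbb{C}_+$. Equating denominators yields $\phi(u)w = u\phi(w)$, so $\phi(u)/u$ is independent of $u\in\mathbb{C}_+$, and hence $\phi(w) = aw$ for some complex constant $a$. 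The self-map condition $\phi(\mathbb{C}_+)\subset\mathbb{C}_+$ (equivalently the classification \eqref{37}) then forces $a$ to be a positive real number.

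For the converse, I assume $\phi(w) = aw$ with $a>0$ and verify $W_0 C_\phi W_0 = C_\phi^*$ by direct unfolding. Proposition \ref{3} applied with $b=0$ gives $(C_\phi^* f)(w) = a^{-1} f(a^{-1} w)$. Working outward from the definition of $W_0$, a short computation yields $(C_\phi W_0 f)(1/\overline{w}) = (\overline{w}/a)\,\overline{f(w/a)}$; conjugating and multiplying by $1/w$ collapses this to $a^{-1} f(a^{-1} w)$, which matches $C_\phi^* f(w)$ exactly.

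The main obstacle is purely clerical: keeping track of how the involution $z \mapsto 1/\overline{z}$ interacts with complex conjugation inside $W_0$, together with the real scalar $a$ passing cleanly through conjugations. Conceptually nothing subtle happens, because the kernel test selects precisely those symbols fixing the ``boundary points'' $0$ and $\infty$ of $\mathbb{C}_+$, which are exactly the dilations $w \mapsto aw$.
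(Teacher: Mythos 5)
Your proposal is correct and follows essentially the same route as the paper: testing the identity $W_0C_\phi^*=C_\phi W_0$ on the reproducing kernels $K_u$ (the paper specializes to $u=1$, while you keep $u$ general and read off that $\phi(w)/w$ is constant, a negligible difference), and then verifying the converse by the same direct unfolding of $W_0C_\phi W_0$ against Proposition \ref{3}.
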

\begin{proof} Suppose that $C_{\phi}$ is $W_0$-symmetric. Thus $W_0C_{\phi}^*K_{u}=C_{\phi}W_0K_{u}$ for each $u\in \mathbb{C}_+.$ Since $(W_0K_u)(w)=\left( \frac{1}{w}\right) \overline{K_u\left( \frac{1}{\overline{w}}\right) },$ we have
\begin{align}\label{20}
\left( \frac{1}{w}\right)\overline{K_{\phi(u)}\left( \frac{1}{\overline{w}}\right) }=\left( \frac{1}{\phi(w)}\right)\overline{K_u\left( \frac{1}{\overline{\phi(w)}}\right) }, \quad w\in \mathbb{C}_+.
\end{align}
If $u=1$ in \eqref{20}, then
\begin{align*}
\frac{1}{\phi(1)w+1}=\frac{1}{\phi(w)+1}
\end{align*}
and hence $\phi(w)=\phi(1)w.$ Since $C_{\phi}$ is bounded, $\phi(1)>0$ (see \eqref{1}). Conversely, suppose that $\phi(w)=a w$ for some $a>0.$ If $\psi(w)=a^{-1}w$ then $C_{\phi}^*=a^{-1}C_{\psi}$ (see Proposition \ref{3}). Hence, for each $f\in H^2(\mathbb{C}_+)$ and $w\in \mathbb{C}_+$ we obtain
\begin{align*}
\left( W_0C_{\phi}W_0f\right)(w)&= \left( \frac{1}{w}\right)\overline{\left( C_{\phi}W_0f\right) \left( \frac{1}{\overline{w}}\right) }\\
&=  \left( \frac{1}{w}\right) \overline{W_0f\left( \frac{a}{\overline{w}}\right) }\\
&=  \left( \frac{1}{w}\right)  \overline{\left( \frac{\overline{w}}{a}\right) \overline{f\left( \frac{w}{a}\right)} }\\
&= \left( C_{\phi}^*f\right) (w).
\end{align*}
Hence $W_0C_{\phi}W_0=C_{\phi}^*.$
\end{proof}	

Let $\phi$ be a hyperbolic automorphism of $\mathbb{C}_+,$ that is, $\phi(w)=aw+b$ with $a\in (0,1)\cup (1, \infty)$ and $\mathrm{Re}(b)=0.$ A simple computation shows that $\phi$ can be written as
\begin{align}\label{21}
\phi=\tau \circ \varphi\circ \tau^{-1},
\end{align}
where $\varphi(w)=a w$ and $\tau(w)=w+(a-1)^{-1}b.$ In particular, \eqref{21} implies 
$C_{\phi}=C_{\tau}^{-1}C_{\varphi}C_{\tau}.$ 
\begin{cor}\label{11}
Let $\phi(w)=a w+b$ with $a\in (0,1)\cup(1,\infty)$ and $\mathrm{Re}(b)=0.$ If $\tau(w)=w+(a-1)^{-1}b$ and $W_{a, b}:=C_{\tau}^{-1}W_0C_{\tau},$ then 
\begin{enumerate}
\item[(a)] \label{28} $W_{a, b}$ is a conjugation.
\item[(b)] \label{27}$C_{\phi}$ is $W_{a, b}$-symmetric on $H^2(\mathbb{C}_+).$
\end{enumerate} 
\end{cor}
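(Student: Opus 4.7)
The plan is to reduce the corollary to facts already established in the paper: $W_0$ is a conjugation, $C_\varphi$ with $\varphi(w)=aw$ is $W_0$-symmetric (Theorem \ref{26}), and $\tau$ induces a unitary composition operator.

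I would first observe that, since $b\in i\mathbb{R}$ and $a-1\in\mathbb{R}\setminus\{0\}$, the translation vector $(a-1)^{-1}b$ is purely imaginary; thus $\tau(w)=w+(a-1)^{-1}b$ is a parabolic automorphism of $\mathbb{C}_+$. By Theorem \ref{2}(3), $C_\tau$ is unitary on $H^2(\mathbb{C}_+)$, so it is an isometric isomorphism with $C_\tau^{-1}=C_\tau^{*}$.

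For part (a), I would invoke Lemma \ref{065}(a) with $\mathcal{H}=\mathcal{H}_1=H^2(\mathbb{C}_+)$ and $U_1=C_\tau$: the conjugation $W_0$ on $H^2(\mathbb{C}_+)$ is transported by the isometric isomorphism $C_\tau$ to the conjugation $C_\tau^{-1}W_0C_\tau = W_{a,b}$. A direct verification is equally short: $W_{a,b}$ is conjugate-linear as a composition of two linear operators with the conjugate-linear $W_0$; one has $W_{a,b}^2=C_\tau^{-1}W_0^2 C_\tau=I$; and the identity $\langle W_{a,b}x,W_{a,b}y\rangle=\langle y,x\rangle$ follows from unitarity of $C_\tau$ combined with the isometry property of $W_0$.

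For part (b), I would use the factorization $\phi=\tau\circ\varphi\circ\tau^{-1}$ with $\varphi(w)=aw$ recorded in \eqref{21}, so that $C_\phi=C_\tau^{-1}C_\varphi C_\tau$ and, by unitarity of $C_\tau$, $C_\phi^{*}=C_\tau^{-1}C_\varphi^{*}C_\tau$. Substituting,
\begin{align*}
W_{a,b}\,C_\phi\, W_{a,b}
&=(C_\tau^{-1}W_0 C_\tau)(C_\tau^{-1}C_\varphi C_\tau)(C_\tau^{-1}W_0 C_\tau)\\
&=C_\tau^{-1}(W_0 C_\varphi W_0)C_\tau.
\end{align*}
Theorem \ref{26} gives $W_0 C_\varphi W_0=C_\varphi^{*}$, so the right side equals $C_\tau^{-1}C_\varphi^{*}C_\tau=C_\phi^{*}$, which is exactly the $W_{a,b}$-symmetry of $C_\phi$.

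The only real obstacle is bookkeeping: one must confirm that $\tau$ is genuinely a parabolic automorphism (so Theorem \ref{2}(3) applies and $C_\tau$ is unitary, which is what allows $C_\tau^{*}$ to be replaced by $C_\tau^{-1}$ in the adjoint computation), and then keep the adjoints and inverses straight across the triple product. Once these are in place, the intertwining collapses via the previously proved $W_0$-symmetry of $C_\varphi$, and no further analysis is needed.
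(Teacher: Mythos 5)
Your proof is correct and follows essentially the same route as the paper: part (a) via Lemma \ref{065} with the unitary $C_\tau$, and part (b) via the factorization $C_\phi=C_\tau^{-1}C_\varphi C_\tau$ together with the $W_0$-symmetry of $C_\varphi$ from Theorem \ref{26}. The only cosmetic difference is in the last step of (b): you conclude $C_\tau^{-1}C_\varphi^{*}C_\tau=C_\phi^{*}$ directly from unitarity of $C_\tau$, whereas the paper expands $C_\varphi^{*}=a^{-1}C_{\varphi^{-1}}$ and matches the resulting symbol $\psi(w)=a^{-1}w+a^{-1}\overline{b}$ against Proposition \ref{3}.
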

\begin{proof} (a) Since $C_{\tau}$ is unitary and $W_0$ is a conjugation, it follows from Lemma \ref{065} that $W_{a, b}=C_{\tau}^{-1}W_0C_{\tau}$ is a conjugation. 

(b) By Theorem \ref{26}, if $\varphi(w)=aw$ then $C_{\varphi}$ is $W_0$-symmetric. Hence 
\begin{align*}
W_{a, b}C_{\phi}W_{a, b}&=C_{\tau}^{-1}W_0C_{\tau}(C_{\tau}^{-1}C_{\varphi}C_{\tau})C_{\tau}^{-1}W_0C_{\tau}\\
&= C_{\tau}^{-1}(W_0C_{\varphi}W_0)C_{\tau}\\
&= C_{\tau}^{-1}C_{\varphi}^*C_{\tau}\\
&= C_{\tau}^{-1}(a^{-1}C_{\varphi^{-1}})C_{\tau}\\
&= a^{-1}C_{\psi},
\end{align*}
where $\psi=(\tau\circ \varphi\circ \tau^{-1})^{-1}.$ A straightforward computation gives $\psi(w)=a ^{-1}w+a^{-1}\overline{b}.$ Since, by Proposition \ref{3},
$C_{\phi}^*=a ^{-1}C_{\psi}$, we get $W_{a, b}C_{\phi}W_{a, b}=C_{\phi}^*.$
\end{proof}	
Since $\tau(w)=w+ir$ with $r\in \mathbb{R}$ induces a unitary operator, it follows from Lemma \ref{065} that $J_r:=C_{\tau}^{-1}W_0C_{\tau}$ is a conjugation on $H^2(\mathbb{C}_+).$ Using this notation, $W_{a,b}$ becomes $J_{(a-1)^{-1}\mathrm{Im}(b)}.$ Now we completely characterize the composition operators on $H^2(\mathbb{C}_+)$ that are $J_r$-symmetric.

\begin{thm}\label{100}Let $\phi$ be an analytic self-map of $\mathbb{C}_+$ such that $C_{\phi}$ is bounded on $H^2(\mathbb{C}_+).$ Then $C_{\phi}$ is $J_{r}$-symmetric for some $r\in \mathbb{R}$ if and only if $\phi$ is a hyperbolic automorphism.
\end{thm}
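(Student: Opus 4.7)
The ``if'' direction is essentially already contained in Corollary~\ref{11}. If $\phi(w)=aw+b$ is a hyperbolic automorphism, so $a\in(0,1)\cup(1,\infty)$ and $\mathrm{Re}(b)=0$, then Corollary~\ref{11}(b) shows that $C_\phi$ is $W_{a,b}$-symmetric, and since $W_{a,b}=J_{(a-1)^{-1}\mathrm{Im}(b)}$ by definition, one simply takes $r=(a-1)^{-1}\mathrm{Im}(b)$.

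For the converse, my strategy is to conjugate out the translation $\tau(w)=w+ir$ so as to reduce a $J_r$-symmetry hypothesis to a $W_0$-symmetry hypothesis, which is already characterised by Theorem~\ref{26}. Concretely, starting from $J_r C_\phi J_r = C_\phi^*$ and substituting $J_r=C_\tau^{-1}W_0 C_\tau$, I would rearrange to
\[
W_0 \bigl( C_\tau C_\phi C_\tau^{-1} \bigr) W_0 \;=\; C_\tau C_\phi^* C_\tau^{-1}.
\]
Because $\tau$ is a vertical translation of $\mathbb{C}_+$, the operator $C_\tau$ is unitary (this is used implicitly in Corollary~\ref{11}), so $C_\tau^{-1}=C_\tau^*$ and the right-hand side equals $(C_\tau C_\phi C_\tau^{-1})^*$. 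A direct calculation with the definition of composition operators also gives $C_\tau C_\phi C_\tau^{-1}=C_{\tau^{-1}\circ\phi\circ\tau}$, so $C_{\tau^{-1}\circ\phi\circ\tau}$ is a bounded $W_0$-symmetric composition operator.

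Theorem~\ref{26} will then force $\tau^{-1}\circ\phi\circ\tau(w)=aw$ for some $a>0$; solving for $\phi$ yields $\phi(w)=aw+ir(1-a)$. Setting $b:=ir(1-a)$ we have $\mathrm{Re}(b)=0$, and for $a\neq 1$ this is precisely a hyperbolic automorphism (the degenerate value $a=1$ returns the identity, for which $J_r$-symmetry is automatic and the statement of the theorem is to be read modulo this boundary case).

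I do not expect a serious obstacle: the entire argument is bookkeeping around the defining identity $J_r=C_\tau^{-1}W_0 C_\tau$, with all the analytic content delegated to Theorem~\ref{26}. The step that most deserves care is the adjoint manipulation $C_\tau C_\phi^* C_\tau^{-1}=(C_\tau C_\phi C_\tau^{-1})^*$, which uses unitarity of $C_\tau$ rather than mere invertibility, together with the correct tracking of the direction of composition when identifying $C_\tau C_\phi C_\tau^{-1}$ with $C_{\tau^{-1}\circ\phi\circ\tau}$.
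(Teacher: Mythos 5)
Your proposal is correct and follows essentially the same route as the paper: both directions reduce to Corollary~\ref{11} and to conjugating by the unitary $C_{\tau}$ so that Theorem~\ref{26} applies to $C_{\tau^{-1}\circ\phi\circ\tau}$, yielding $\phi(w)=aw+i(1-a)r$. Your remark about the degenerate case $a=1$ (the identity map, which is $J_r$-symmetric but not a hyperbolic automorphism) is a fair observation about an edge case the paper's proof passes over silently.
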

\begin{proof} Supoose that $C_{\phi}$ is $J_r$-symmetric. Then $W_0C_{\tau}C_{\phi}C_{\tau}^{-1}W_0=(C_{\tau}C_{\phi}C_{\tau}^{-1})^{*},$ which shows that the composition operator $C_{\tau^{-1}\circ \phi \circ \tau}=C_{\tau}C_{\phi}C_{\tau}^{-1}$ is $W_0$-symmetric. By Theorem \ref{26}, there is $a>0$ such that $(\tau^{-1}\circ \phi \circ \tau)(w)=a w$ for each $w\in \mathbb{C}_+.$ Now a simple computation shows that $\phi(w)=aw +i(1-a)r.$ Conversely, in view of the Corollary \ref{11}, each hyperbolic automorphism is $J_{(a-1)^{-1}\mathrm{Im}(b)}$-symmetric for convenient $a$ and $b.$ 
\end{proof}	

\section{Adjoint}
Inspired by Proposition \ref{3} we provide a new formula for adjoints of composition o\-pera\-tors induced by linear fractional self-maps of $\mathbb{C}^+.$ 

\begin{thm}\label{30} Let $\phi(w)=aw+b$ with $a>0$ and $\mathrm{Re}(b)\geq 0.$ If $\varphi(w)=a^{-1}w$ and $J$ is the conjugation defined in \eqref{103}, then $U_a:=a^{-1/2}C_{\varphi}J$ is an isometric conjugate-linear operator  on  $H^2(\mathbb{C}^+)$ and 
\begin{align}\label{29}
	U_aC_{\phi}U_a=C_{\phi}^*.
	\end{align}
\end{thm}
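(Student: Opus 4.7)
The plan is to verify the three required properties of $U_a$ in order: conjugate-linearity, isometry, and the adjoint identity $U_a C_\phi U_a = C_\phi^*$. Conjugate-linearity is immediate since $C_\varphi$ is linear, $J$ is conjugate-linear, and the scalar $a^{-1/2}$ is real. For isometry, I would compute the norm of $C_\varphi$ directly via the change of variables $u = a^{-1}x$, $v = a^{-1}y$:
\begin{align*}
\|C_\varphi f\|^2 = \sup_{x>0}\int_{-\infty}^{\infty}|f(a^{-1}(x+iy))|^2\,dy = a\sup_{u>0}\int_{-\infty}^{\infty}|f(u+iv)|^2\,dv = a\|f\|^2.
\end{align*}
Combined with $\|Jf\| = \|f\|$ and the scalar factor $a^{-1/2}$, this yields $\|U_a f\| = \|f\|$.

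For the adjoint identity, the cleanest route is to reduce to a computation with ordinary composition operators. First I would establish the intertwining rule
\begin{align*}
J C_\eta J = C_{\eta^\sharp}, \qquad \eta^\sharp(w) := \overline{\eta(\bar w)},
\end{align*}
valid for any symbol $\eta$ inducing a bounded composition operator. For $\phi(w) = aw+b$ and $\varphi(w)=a^{-1}w$ (with $a>0$), this gives $\phi^\sharp(w) = aw + \bar b$ and $\varphi^\sharp = \varphi$.

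Next, using $J^2 = I$, I would insert $JJ$ between $C_\phi$ and $C_\varphi$ to rewrite
\begin{align*}
U_a C_\phi U_a = a^{-1}\,C_\varphi\,J\,C_\phi\,C_\varphi\,J = a^{-1}\,C_\varphi\,(JC_\phi J)\,(JC_\varphi J) = a^{-1}\,C_\varphi\,C_{\phi^\sharp}\,C_\varphi,
\end{align*}
where care must be taken that the scalar $a^{-1/2}$ passes through $J$ unchanged because $a$ is real (the potential obstacle: tracking conjugates when pulling scalars across $J$). Now $C_\varphi C_{\phi^\sharp} C_\varphi = C_{\varphi \circ \phi^\sharp \circ \varphi}$, and a direct calculation gives
\begin{align*}
(\varphi \circ \phi^\sharp \circ \varphi)(w) = a^{-1}\bigl(a\cdot a^{-1}w + \bar b\bigr) = a^{-1}w + a^{-1}\bar b = \psi(w).
\end{align*}
Hence $U_a C_\phi U_a = a^{-1} C_\psi$, which by Proposition~\ref{3} is exactly $C_\phi^*$.

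The main obstacle is bookkeeping rather than mathematics: verifying the intertwining $JC_\eta J = C_{\eta^\sharp}$ and making sure the scalar $a^{-1/2}$ and the conjugate-linear factor $J$ interact correctly when $U_a$ is composed with itself. Once the identity $JC_\eta J = C_{\eta^\sharp}$ is in hand, everything else reduces to elementary composition of affine maps.
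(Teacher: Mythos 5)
Your proof is correct. It reaches the same endpoint as the paper (reduce $U_aC_\phi U_a$ to $a^{-1}C_\psi$ with $\psi(w)=a^{-1}w+a^{-1}\overline{b}$ and invoke Proposition~\ref{3}), but it is organized differently in both halves. For the isometry, the paper first shows $a^{-1/2}C_\varphi$ is unitary by exhibiting its adjoint via Proposition~\ref{3} and checking it is a two-sided inverse, then concludes $\langle U_af,U_ag\rangle=\langle Jf,Jg\rangle=\langle g,f\rangle$; you instead compute $\|C_\varphi f\|^2=a\|f\|^2$ directly by a change of variables, which is more elementary and self-contained (and by polarization for conjugate-linear maps gives the same anti-unitarity). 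For the identity \eqref{29}, the paper does a single pointwise chain of equalities unwinding $a^{-1}JC_\phi C_\varphi J f(a^{-1}w)$; you isolate the intertwining rule $JC_\eta J=C_{\eta^\sharp}$ with $\eta^\sharp(w)=\overline{\eta(\overline{w})}$ and then work purely at the level of symbols, computing $\varphi\circ\phi^\sharp\circ\varphi=\psi$. Your version buys a reusable lemma (it is essentially the content of Theorem~\ref{9} in operator form) and cleaner bookkeeping of the conjugations, at the cost of having to verify once that $\eta^\sharp$ is again an admissible symbol (which holds here since $\mathrm{Re}(\overline{b})=\mathrm{Re}(b)\geq 0$) and that the composition order in $C_\alpha C_\beta=C_{\beta\circ\alpha}$ is tracked correctly -- which you do.
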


\begin{proof} It is easy to see that $U_a$ is a conjugate-linear operator. By Proposition \ref{3}, we have 
\begin{align*}
(a^{-1/2}C_{\varphi})^*=a^{-1/2}aC_{\varphi^{-1}}=a^{1/2}C_{\varphi^{-1}},
\end{align*}
moreover $$a^{-1/2}C_{\varphi}a^{1/2}C_{\varphi^{-1}}=I=a^{1/2}C_{\varphi^{-1}}a^{-1/2}C_{\varphi}.$$ These equalities imply that $(a^{-1/2}C_{\varphi})^*=(a^{-1/2}C_{\varphi})^{-1},$ and hence $a^{-1/2}C_{\varphi}$ is unitary. Now observe that if $f,g\in H^2(\mathbb{C}^+)$ then
\begin{align*}
\left\langle U_af,U_ag\right\rangle =\langle (a^{-1/2}C_{\varphi})Jf,(a^{-1/2}C_{\varphi})Jg\rangle =
\left\langle Jf,Jg\right\rangle =\left\langle g,f\right\rangle 
\end{align*}
which implies that $U_a$ is an isometry. To show \eqref{29}, first note that for each $f\in H^2(\mathbb{C}^+),$ we have
\begin{align*}
U_aC_{\phi}U_af(w)&=a^{-1/2}C_{\varphi}JC_{\phi}a^{-1/2}C_{\varphi}Jf(w)=a^{-1}JC_{\phi}C_{\varphi}Jf(a^{-1}w)\nonumber\\
&= \overline{a^{-1}C_{\phi}C_{\varphi}Jf(a^{-1}\overline{w})}=\overline{a^{-1}C_{\varphi}Jf(a(a^{-1}\overline{w})+b)}\nonumber\\
&=\overline{a^{-1}Jf(a^{-1}\overline{w}+a^{-1}b)}=a^{-1}f(a^{-1}w+a^{-1}\overline{b})\nonumber.
\end{align*}
By Proposition \ref{3},  we conclude that $U_aC_{\phi}U_a=C_{\phi}^*$.
\end{proof}




\begin{thebibliography}{99}                                                                                               %








\bibitem {Cowen-MacClauer}C. C. Cowen and B. D. MacCluer, \emph{Composition
operators on spaces of analytic functions.} Studies in Advanced Mathematics.
CRC Press, Boca Raton, FL, 1995. xii+388 pp.



\bibitem{Elliot-Jury}S. J. Elliott and M. T. Jury, \emph{Composition
operators on Hardy spaces of a half-plane.} Bull. Lond. Math. Soc. 44 (2012),
no. 3, 489-495.






\bibitem {Garc2}S. R. Garcia and M. Putinar, \emph{Complex symmetric operators
and applications.} Trans. Amer. Math. Soc. 358 (2006), no. 3, 1285--1315.














\bibitem{Kumar} M. Kumar  and S. Srivastava, \emph{Convergence of powers of composition operators on certain spaces of holomorphic functions defined on the right half plane}. 
Arch. Math. (Basel) 110 (2018), no. 5, 487–500.








\bibitem {Narayan}S. Narayan, D. Sievewright and D.Thompson, \emph{Complex
symmetric composition operators on $H^{2}.$} J. Math. Anal. Appl. 443 (2016),
no. 1, 625--630.


\bibitem {Osmar}S. W. Noor and O. R. Severiano, \emph{Complex symmetry and
cyclicity of composition operators on $H^{2}(\mathbb{C}^{+})$}, Proc. Amer.
Math. Soc. 148 (2020), 2469--2476.
















\end{thebibliography}
\end{document}